\numberwithin{equation}{section}
\setlist[enumerate]{leftmargin=*,label=(\roman*)}
\theoremstyle{plain}
\newtheorem{theorem}{Theorem}[section]
\newtheorem{proposition}[theorem]{Proposition}
\newtheorem{lemma}[theorem]{Lemma}
\newtheorem{corollary}[theorem]{Corollary}
\theoremstyle{remark}
\newtheorem{remark}[theorem]{Remark}
\newcommand{\eps}{\varepsilon}
\renewcommand{\phi}{\varphi}
\newcommand{\N}{\mathbb{N}}
\newcommand{\R}{\mathbb{R}}
\renewcommand{\P}{\mathbb{P}}
\newcommand{\E}{\mathbb{E}}
\newcommand{\cA}{\mathcal{A}}
\newcommand{\cF}{\mathcal{F}}
\newcommand{\cP}{\mathcal{P}}
\renewcommand{\d}{{\rm d}}
\newcommand{\cpl}{{\rm Cpl}}
\newcommand{\Lip}{{\rm Lip}}
\def\var{{\rm var}}
\def\Cb{{\rm UC}_{\rm b}}
\def\Bm{{\rm B}}
\def\Lipb{{\rm Lip}_{\rm b}}
\DeclareMathOperator{\OT}{OT}
\DeclareMathOperator*{\esssup}{ess\,sup}
\newcommand{\X}{E}
\begin{document}

\title[Hopf-Lax formula for controlled L\'evy processes]{Hopf-Lax approximation for value functions of L\'evy optimal control problems}

\author{Michael Kupper}
\address{Department of Mathematics and Statistics, University of Konstanz}
\email{kupper@uni-konstanz.de}

\author{Max Nendel}
\address{Department of Statistics and Actuarial Science, University of Waterloo}
\email{mnendel@uwaterloo.ca}

\author{Alessandro Sgarabottolo}
\address{Department of Mathematics, LMU München}
\email{sgarabottolo@math.lmu.de}

\date{\today}
\thanks{Financial support through the Deutsche Forschungsgemeinschaft (DFG, German Research Foundation) -- SFB 1283/2 2021 -- 317210226 is gratefully acknowledged. Part of this research was conducted during a visit of the second-named author at the Sydney Mathematical Research Institute}

\begin{abstract}
    In this paper, we investigate stochastic versions of the Hopf-Lax formula which are based on compositions of the Hopf-Lax operator with the transition kernel of a L\'evy process taking values in a separable Banach space.\ We show that, depending on the order of the composition, one obtains upper and lower bounds for the value function of a stochastic optimal control problem associated to the drift controlled L\'evy dynamics.\ Dynamic consistency is restored by iterating the resulting operators.\ Moreover, the value function of the control problem is approximated both from above and below as the number of iterations tends to infinity, and we provide explicit convergence rates and guarantees for the approximation procedure.

    \smallskip
    \noindent \emph{Key words:}\ Hopf-Lax formula, L\'evy process, optimal control problem, nonlinear Lie-Trotter formula, Nisio semigroup, Wasserstein perturbation.
    \smallskip
		
    \noindent \emph{MSC 2020 Classification:}\ 
    Primary
    47H20;
    35A35.
    Secondary
    41A25;
    93E20
    41A35.

\end{abstract}

\maketitle

\section{Introduction}

The study of basic deterministic optimal control problems in a separable Banach space $\X$ is closely related to the Hamilton-Jacobi equation
\begin{align*}
    \partial_t u & = H (\nabla_x u),\\
    u_0 & = f
\end{align*}
with a suitable initial condition $f\colon \X\to \R$ and a Hamiltonian $H\colon \X'\to \R$, where $\X'$ denotes the topological dual of $\X$. Classical PDE theory shows that, whenever $H$ is convex with superlinear growth and $f$ is bounded and Lipschitz continuous, it is possible to obtain a viscosity solution to the previous equation by using the celebrated Hopf-Lax formula
\[
u_t(x) = (\Phi_t f)(x) := \sup_{a\in \X} \Big( f(x + a) - t c\big( \tfrac{a}{t} \big) \Big),
\]
where $c\colon \X\to \R$ is such that $H$ is its Legendre transform, see, for example, \cite[\textsection10.3 Theorem 3]{evans2010pde} for the case $\X=\R^d$ with $d\in \N$ and a different sign convention.\ A key property that enables this result is the dynamic consistency of the Hopf-Lax operator, i.e., $\Phi_{s + t} f = \Phi_s \Phi_t f$ for all $s,t \ge 0$ and all bounded Lipschitz continuous functions $f\colon \X\to \R$.

In this paper, we study approximation schemes for value functions of optimal control problems with drift controlled L\'evy dynamics in separable Banach spaces based on the Hopf-Lax operator.\ 
To that end, we consider two families $(I_t)_{t\geq0}$ and $(J_t)_{t\geq0}$ of operators, defined by
\[
I_t f := \mu_t \Phi_t f \quad \text{and} \quad J_t f := \Phi_t \mu_t f\quad\text{for }f\in \Cb\text{ and }t\geq0,
\]
where $(\mu_t f)(x) := \int_\X f(x + y)\, \mu_t(\d y)$ for the family $(\mu_t)_{t \ge 0}$ of infinitely divisible distributions associated to the underlying L\'evy process and $\Cb$ denotes the space of all bounded uniformly continuous functions $\X\to \R$.

Clearly, the composition with the kernel $\mu_t$ does not preserve the dynamic consistency of the Hopf-Lax operator unless $\mu_t=\delta_0$ for all $t\geq0$.
However, we show that, for any convex function $c$ with superlinear growth, the dynamic consistency can be restored by considering iterative schemes of Lie-Trotter-type and the limit is given by the value function of the optimal drift control problem for the associated L\'evy dynamics with running cost $c$ and terminal cost $f\in \Cb$, i.e.,
\begin{align}\notag
\big(V_tf\big)(x):=\sup_{\alpha\in \cA} \E\bigg[f\bigg(x+Y_t+\int_0^t\alpha_u&\,\d u\bigg)-\int_0^t c(\alpha_u)\,\d u\bigg]\\
&=\lim_{n\to \infty} \big(I_{t/n}^nf\big)(x)=\lim_{n\to \infty}\big(J_{t/n}^n f\big)(x),
\label{eq.intro.main}
\end{align}
where $(Y_t)_{t\geq0}$ is an $\X$-valued L\'evy process on a filtered probability space $\big(\Omega,\cF,(\cF_t)_{t\geq0},\P\big)$ with $Y_t\sim\mu_t$ for all $t\geq0$, the set $\cA$ consists of all adapted $\X$-valued stochastic processes $(\alpha_t)_{t\geq0}$ with $\int_0^t\alpha_u\,\d u<\infty$ $\P$-a.s.\ for all $t\geq0$, and the limits are uniform in the state variable $x\in \X$.

We point out that nonlinear versions of Chernoff approximations as a generalization of classical 
Lie-Trotter theorems, which are usually given for linear semigroups, cf.\ \cite[Corollary 5.5]{pazy1983} and \cite{trotter1959}, have recently been derived in  \cite{BK22,BK23}.\
However, our proof of \eqref{eq.intro.main} does not invoke these results and is entirely self-contained.

Moreover, we show that the sequences of iterations $(I_{t/n}^n f)_{n \in \N}$ and $(J_{t/n}^n f)_{n \in \N}$ provide upper and lower bounds, respectively, for the value function $V_t f$ for all $t\geq 0$ and $f\in \Cb$.
As a byproduct of our main result, we also obtain the dynamic programming principle for the value function $(V_t)_{t\geq0}$ and recover the semigroup property for the family $(\Phi_t)_{t \ge 0}$.

The two stochastic Hopf-Lax approximations for $V_t f$ have been discussed in different contexts in the literature.\ The family of operators $(J_t)_{t \ge 0}$ produces the so-called Nisio semigroup, which is based on iterations of suprema over families of linear semigroups and corresponds to the choice of piecewise constant controls over refining partitions, see Corollary \ref{cor:dpp}.\ We refer to \cite{nendel2019upper,nisio1976} for a more general discussion of Nisio semigroups and their relation to optimal control problems.\
The family of operators $(I_t)_{t \ge 0}$, on the other hand, corresponds to worst-case expectations with an optimal transport penalties, cf.\ Lemma~\ref{lem:OT}.\ Therefore, the approximation from above in terms of $(I_{t/n}^n f)_{n \in \N}$ can be seen as a dynamically consistent worst-case expectation over nonparametric perturbations of the underlying L\'evy process.\ For the study of nonlinear semigroups generated by Wasserstein perturbations of transition semigroups, we refer to \cite{bartl2021randomwalks,nendel2023wasserstein}.\ In these works, the connection with the optimal control problem is established using viscosity theory, while we use an estimate that is based on structural arguments, cf.\ Lemma \ref{lem.key-estimate}.

For bounded Lipschitz continuous functions $f\colon E\to \R$, we provide an explicit uniform bound for the distance between $I_{t/n}^n f$ and $J_{t/n}^n f$ in terms of a constant depending only on the Lipschitz constant of $f$ and the cost function $c$ times $\frac{t}{n}$. In particular, both objects converge to the value function $V_t f$ of the control problem in \eqref{eq.intro.main} w.r.t.\ the supremum norm at a rate of $n^{-1}$.\ For bounded uniformly continuous functions $f\colon \X\to \R$, we provide explicit convergence guarantees in terms of the cost function $c$ and the inverse modulus of continuity of $f$, cf.\ Theorem \ref{thm.main}.

For similar approximations in a more general context, achieving a convergence rate of $n^{-1/4}$, we refer to \cite[Section~4.1]{blessing2023convergence}.\ In \cite{blessing2023convergence}, convergence rates for Chernoff-type approximations of strongly continuous convex monotone semigroups have been investigated.\
At the level of Hamilton–Jacobi–Bellman (HJB) equations, these correspond to the convergence rates of monotone schemes for viscosity solutions, see, e.g., \cite{BJ2005, BS1991, Krylov05, jiang2022, HLZ2020}. To establish the required consistency condition for HJB equations, one typically relies on convolution techniques, which usually require a finite-dimensional state space. 
In contrast to this approach, our method relies on direct arguments, using the specific structure of the dynamics, and provides linear convergence rates even for infinite-dimensional state spaces.

\section{Setup and preliminaries}
In this section, we state three auxiliary results which form the cornerstone of the proof of the main result, Theorem \ref{thm.main} below.\ Throughout, let $\X$ be a separable Banach space, $\Cb$ denote the space of all bounded and uniformly continuous functions $\X \to \R$, and $\cP$ be the set of all Borel probability measures on $\X$.\ The space $\Cb$ is endowed with the supremum norm $\|\cdot\|_\infty$ and the pointwise order $f \le g$ if $f(x) \le g(x)$ for all $x \in \X$ and $f,g\in \Cb$.\ An operator $\Gamma\colon \Cb\to \Cb$ is called a contraction if
\[
 \|\Gamma f-\Gamma g\|_\infty\leq \|f-g\|_\infty\quad \text{for all }f,g\in \Cb. 
\]
Moreover, let $\Lipb$ be the space of all $f\in \Cb$ with
$$
\|f\|_\Lip:=\sup_{x,y\in \X} \frac{|f(x)-f(y)|}{|x-y|}<\infty,
$$
where $|a|$ denotes the norm of $a\in \X$ and we use the convention $\frac{0}{0}=0$.
For $f\in \Cb$, we define
\[
\var f:= \sup_{x,y\in \X}|f(x)-f(y)|=\sup_{x\in \X} f(x)-\inf_{x\in \X}f(x)
\]
and, for all $\eps>0$,
\[
\omega_f^{-1}(\eps):=\sup\big\{\delta>0\colon |f(x)-f(y)|\leq \eps\text{ for all }x,y\in \X\text{ with }|x-y|\leq \delta\big\}.
\]
In the sequel, let $\Phi,\Psi\colon \Cb\to \Cb$ be two contractions and assume that
\begin{equation}\label{eq.Phi1}
\Phi f\geq f\quad \text{for all }f\in \Cb.
\end{equation}
We define
\[
If:= \Psi\Phi f\quad\text{and}\quad Jf:=\Phi\Psi f\quad\text{for all }f\in \Cb.
\]
Here and throughout, the product of two operators $\Cb\to \Cb$ is to be understood as their concatenation.

The following lemma provides the key estimate for our main result.
\begin{lemma} \label{lem.key-estimate}
For all $f\in \Cb$ and $n\in \N$,
\[
I^n f-J^nf \leq \sup_{x\in \X}\Big(\big(\Phi f\big)(x)-f(x)\Big).
\]
\end{lemma}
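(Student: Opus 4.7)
The plan is to reduce the $n$th step to a one-step telescoping argument by exploiting the algebraic identity
\[
I^n = (\Psi\Phi)^n = \Psi\,(\Phi\Psi)^{n-1}\,\Phi = \Psi\,J^{n-1}\,\Phi,
\qquad
J^n = (\Phi\Psi)^n = \Phi\,\Psi\,J^{n-1}.
\]
Thus $I^n f$ and $J^n f$ differ only in where the leading $\Phi$ acts: in $I^n f$ it is applied to $f$ at the very beginning (inside $\Psi J^{n-1}$), whereas in $J^n f$ it is applied to $\Psi J^{n-1} f$ at the very end. This suggests inserting the intermediate function $\Psi J^{n-1} f$ and splitting
\[
I^n f - J^n f \;=\; \bigl(\Psi J^{n-1}\Phi f - \Psi J^{n-1} f\bigr)\; +\; \bigl(\Psi J^{n-1} f - \Phi\,\Psi J^{n-1} f\bigr).
\]

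Next, each bracket is estimated by one of the two hypotheses. For the first bracket, I would note that $\Psi J^{n-1}$ is a composition of contractions and hence itself a contraction, so the pointwise difference is bounded above by $\|\Psi J^{n-1}\Phi f-\Psi J^{n-1}f\|_\infty \le \|\Phi f - f\|_\infty$. Since the standing hypothesis \eqref{eq.Phi1} gives $\Phi f - f\ge 0$, this sup norm equals $\sup_{x\in \X}\bigl((\Phi f)(x)-f(x)\bigr)$, which is exactly the bound appearing in the lemma. For the second bracket, applying \eqref{eq.Phi1} to the function $g:=\Psi J^{n-1}f\in\Cb$ yields $\Phi g\ge g$, so $\Psi J^{n-1}f - \Phi\,\Psi J^{n-1} f\le 0$ pointwise.

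Adding these two bounds gives the pointwise inequality
\[
I^n f - J^n f \;\le\; \sup_{x\in \X}\bigl((\Phi f)(x)-f(x)\bigr),
\]
which is the claim. Note that no induction on $n$ is required: the exponent $n-1$ enters only through the contraction $\Psi J^{n-1}$, and the proof handles all $n\in \N$ uniformly once the two factorisations above are in hand (the case $n=1$ corresponds to $J^{0}=\id$).

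The only genuine step I expect to require care is the first factorisation $(\Psi\Phi)^n=\Psi J^{n-1}\Phi$; once this rearrangement is noted, the remainder is just contraction of a composition plus the one-sided inequality $\Phi g\ge g$. A point I would double-check is that one really needs the hypothesis $\Phi f\ge f$ to convert $\|\Phi f-f\|_\infty$ into the one-sided supremum on the right-hand side; without it the argument would only deliver the weaker bound $\|\Phi f-f\|_\infty$, which is not what is stated.
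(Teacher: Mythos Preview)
Your proof is correct and essentially identical to the paper's: both rely on the factorisation $I^n=\Psi J^{n-1}\Phi$, drop the outer $\Phi$ in $J^n=\Phi\Psi J^{n-1}$ via $\Phi g\ge g$, and then use that $\Psi J^{n-1}$ is a contraction together with $\Phi f\ge f$ to identify $\|\Phi f-f\|_\infty$ with the one-sided supremum. The only cosmetic difference is that you insert the intermediate term $\Psi J^{n-1}f$ explicitly, whereas the paper writes the same two steps as a single chain of inequalities.
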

\begin{proof}
    Let $f\in \Cb$ and $n\in \N$.\ First, observe that, by definition, $$I^n f = \underbrace{\Psi\Phi \cdots \Psi \Phi}_{n\text{-times}}f=\Psi\underbrace{\Phi \Psi\cdots \Phi \Psi}_{(n-1)\text{-times}} \Phi f = \Psi J^{n-1} \Phi f.$$ Since $\Phi f\geq f$ and, both, $\Phi$ and $\Psi$ are contractions, we therefore obtain that
    \begin{align*}
        I^n f - J^n f &= \Psi J^{n-1} \Phi f - \Phi \Psi J^{n-1} f \leq  \Psi J^{n-1} \Phi f - \Psi J^{n-1} f\\
        &\le \|\Phi f - f\|_\infty= \sup_{x\in \X}\Big(\big(\Phi f\big)(x)-f(x)\Big).
    \end{align*}
    The proof is complete.
\end{proof}

Moving towards control problems, we now specialize to a particular form of the nonlinear operator $\Phi$.\ To that end, let $c \colon \X \to [0, \infty]$ be a convex function that grows superlinearly, i.e., $\lim_{|a| \to \infty} c(a)/|a|= \infty$, and satisfies $c(0)=0$. Then, for all $b\in [0,\infty)$,
\[
\overline c(b):=\sup_{a\in \X} \big(b|a|-c(a)\big)\in [0,\infty).
\]
For $f\in \Cb$ and $x\in \X$, we define
\begin{equation}\label{eq.def.Phi}
\big(\Phi f\big)(x):=\sup_{a\in \X} \big(f(x+a)-c(a)\big).
\end{equation}
Then, for all $f,g \in \Cb$ and $x,y\in \X$,
\[
\big|\big(\Phi f\big)(x) - \big(\Phi g\big)(y)\big| \le \sup_{a \in \X} |f(x + a) - g(y + a)|,
\]
which implies that $\Phi\colon \Cb\to \Cb$ is a contraction.\
Since $c(0)=0$, $\Phi$ also satisfies \eqref{eq.Phi1}.

\begin{proposition}\label{prop.guarantee}
  Let $\Phi$ be of the form \eqref{eq.def.Phi}.\ Then, for all $\eps>0$, $f\in \Cb$ and $n\in \N$,
\begin{equation}\label{eq.guarantee}
 I^nf -J^nf\leq \eps \quad\text{if}\quad \eps^{-1}\overline c\Bigg(\frac{\var f}{\omega_f^{-1}(\eps)}\Bigg)\leq 1.
\end{equation}
Moreover, for all $f\in \Lipb$ and $n\in \N$,
\begin{equation}\label{eq.lip.estimate}
 I^nf -J^nf\leq \overline c\big(\|f\|_\Lip\big).
\end{equation}
\end{proposition}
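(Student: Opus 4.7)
The plan is to invoke Lemma~\ref{lem.key-estimate}, which reduces both estimates to finding a uniform upper bound, in $x\in\X$, for the quantity
\[
    \big(\Phi f\big)(x)-f(x)=\sup_{a\in \X}\bigl(f(x+a)-f(x)-c(a)\bigr).
\]
The key structural input is the Fenchel-type inequality $c(a)\ge b|a|-\overline c(b)$ for all $a\in\X$ and $b\ge 0$, which is immediate from the definition of $\overline c$.

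For the Lipschitz bound \eqref{eq.lip.estimate}, I would simply estimate $f(x+a)-f(x)\le \|f\|_\Lip\,|a|$ and take the supremum over $a\in\X$, recognizing the result as the Legendre-type expression defining $\overline c(\|f\|_\Lip)$. Combined with Lemma~\ref{lem.key-estimate}, this gives \eqref{eq.lip.estimate}.

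For the quantitative guarantee \eqref{eq.guarantee}, the key idea is to split the supremum over $a$ into the two regimes $|a|\le \omega_f^{-1}(\eps)$ and $|a|> \omega_f^{-1}(\eps)$. On the first regime, the very definition of $\omega_f^{-1}$ yields $f(x+a)-f(x)\le \eps$, while $c(a)\ge 0$, so $f(x+a)-f(x)-c(a)\le\eps$ directly. On the second regime, I would apply the Fenchel inequality with the specific choice $b:=\var f/\omega_f^{-1}(\eps)$, which is engineered so that $b\,\omega_f^{-1}(\eps)=\var f$. Then $|a|>\omega_f^{-1}(\eps)$ gives
\[
    c(a)\ge b|a|-\overline c(b)> \var f-\overline c\!\left(\frac{\var f}{\omega_f^{-1}(\eps)}\right)\ge \var f-\eps
\]
by the hypothesis $\eps^{-1}\overline c(\var f/\omega_f^{-1}(\eps))\le 1$. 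Since $f(x+a)-f(x)\le \var f$, this yields $f(x+a)-f(x)-c(a)<\eps$ on the second regime as well, so $(\Phi f)(x)-f(x)\le\eps$ uniformly in $x$, and \eqref{eq.guarantee} follows from Lemma~\ref{lem.key-estimate}.

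The only mild subtlety is handling the degenerate cases: if $\var f=0$, then $f$ is constant, $\Phi f=f-c(0)=f$, and the estimate is trivial; if $\omega_f^{-1}(\eps)=\infty$, then $\var f\le\eps$, and the bound is again trivial since $c\ge 0$. The main step is thus really the optimal choice of the slope $b$ in the Fenchel inequality, after which there is no serious obstacle.
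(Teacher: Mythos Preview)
Your proof is correct and follows essentially the same route as the paper: both arguments reduce to bounding $\sup_{x\in\X}\bigl((\Phi f)(x)-f(x)\bigr)$ via Lemma~\ref{lem.key-estimate}, then split the supremum over $a$ at the threshold $\omega_f^{-1}(\eps)$ (the paper uses $<$/$\ge$, you use $\le$/$>$) and apply the Fenchel-type inequality with slope $b=\var f/\omega_f^{-1}(\eps)$, while the Lipschitz case is handled identically. The only differences are cosmetic---your explicit treatment of the degenerate cases $\var f=0$ and $\omega_f^{-1}(\eps)=\infty$, and the equivalent rephrasing of $b|a|-c(a)\le\overline c(b)$ as $c(a)\ge b|a|-\overline c(b)$.
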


\begin{proof}
 Let $\eps>0$ and $f\in \Cb$ with
 \[
 \overline c\Bigg(\frac{\var f}{\omega_f^{-1}(\eps)}\Bigg)\leq \eps.
 \]
 Then,
 \[
  \sup_{x\in \X}\Big(\big(\Phi f\big)(x)-f(x)\Big)=\sup_{x\in \X} \sup_{a\in \X}\big(f(x+a)-f(x)-c(a)\big).
 \]
 Let $x\in \X$.\ If $a\in \X$ with $|a|< \omega_f^{-1}(\eps)$, then 
 \[
 f(x+a)-f(x)-c(a)\leq f(x+a)-f(x)\leq \eps
 \]
 and, if $a\in \X$ with $|a|\geq \omega_f^{-1}(\eps)$, then
 \begin{align*}
  f(x+a)-f(x)-c(a)&\leq \var f-c(a)\leq \frac{\var f}{\omega_f^{-1}(\eps)}|a|-c(a)\leq \overline c\Bigg(\frac{\var f}{\omega_f^{-1}(\eps)}\Bigg)\leq \eps.
 \end{align*}
 The claim in \eqref{eq.guarantee} thus follows from Lemma \ref{lem.key-estimate}.
 
 Now, let $f\in \Lipb$ and $x\in \X$. Then,
\[
\big(\Phi f\big)(x)-f(x)\leq \sup_{a\in \X} \big(\|f\|_\Lip |a|-c(a)\big)= \overline c\big(\|f\|_\Lip\big),
\]
which proves \eqref{eq.lip.estimate}.
\end{proof}

The previous proposition provides an upper bound for the difference of $I^n$ and $J^n$ in terms of the function $\overline c$.\ Later, we will rescale the cost function $c$ to a different cost $c_n$ for each $n\in \N$ with $\overline{c_n}=\frac1n \overline{c}$, leading to explicit convergence rates and guarantees for our Hopf-Lax approximation scheme.\ For this, it is essential to ensure that
the inequality $J f\leq If$ is valid for all $f\in \Cb$, leading to
\[
0\leq I^nf-J^nf\quad \text{for all }f\in \Cb\text{ and }n\in \N.
\]
This can be achieved for particular choices of $\Psi$.\ 
In view of this and our main result, Theorem \ref{thm.main} below, we restrict our attention to a special class of operators, which yield explicit descriptions of the operators $I$ and $J$, see the first line of \eqref{eq.JleqI} and Lemma \ref{lem:OT} below.\ In the sequel, let
\begin{equation}\label{eq.def.Psi}
\Psi f:= \inf_{\mu\in \cP} \Big(\mu f+\gamma(\mu)\Big)\quad\text{for all }f\in \Cb
\end{equation}
with $\gamma\colon \cP\to [0,\infty]$ satisfying $\inf_{\mu\in \cP}\gamma(\mu)=0$ and 
\begin{equation} \label{eq:def.mu}
(\mu f)(x) := \int_\X f(x+y)\,\mu(\d y)\quad \text{for all }\mu\in \cP,\, f\in \Cb\text{, and }x\in \X.
\end{equation}
Then, for all $f,g \in \Cb$ and $x,y \in \X$,
\begin{align*}
\big|\big(\Psi f\big)(x) - \big(\Psi g\big)(y)| & \le \sup_{\mu \in \cP} \bigg| \int_\X f(x + z)\,\mu(\d z) - \int_\X g(y + z)\,\mu(\d z) \bigg| \\
& \le \sup_{\mu \in \cP} \int_\X \bigg|f(x + z) - g(y + z) \bigg|\,\mu(\d z)\\
&\leq \sup_{a \in \X} |f(x+a)-g(y+a)|,
\end{align*}
which again implies that $\Psi\colon \Cb\to \Cb$ is a contraction.

Using the identity $\mu f-\infty=-\infty=\mu(-\infty)$ for $f\in \Cb$ and $\mu\in \cP$ as well as the convention $-\infty+\infty=-\infty$, we obtain the desired inequality
\begin{align}
\notag Jf=\Phi\Psi f&=\sup_{a\in \X} \inf_{\mu\in \cP} \Big(\mu\big(f(\cdot +a)-c(a)\big)+\gamma(\mu)\Big)\\
&\leq \inf_{\mu\in \cP}\Bigg(\mu \bigg(\sup_{a\in \X}\big(f(\cdot +a)-c(a)\big)\bigg)+\gamma(\mu)\Bigg)=\Psi\Phi f=If \label{eq.JleqI}
\end{align}
for all $f\in \Cb$.

We end this section with an explicit representation of the operator $I$ for $\Phi$ and $\Psi$ of the form \eqref{eq.def.Phi} and \eqref{eq.def.Psi}, respectively.
Recall that a coupling between two probability measures $\mu, \nu \in \cP$ is a probability measure $\pi$ on the Borel $\sigma$-algebra of the cartesian product $\X\times \X$ with first marginal $\mu$ and second marginal $\nu$.\ For all $\mu,\nu\in \cP$, we denote by $\cpl(\mu, \nu)$ the set of couplings between $\mu$ and $\nu$, and define the optimal transport problem associated with the cost function $c$ as
\[
\OT_c(\mu,\nu):=\inf_{\pi\in \cpl(\mu,\nu)} \int_{\X\times \X} c(z-y)\,\pi(\d y,\d z)\in [0,\infty].
\]

\begin{lemma} \label{lem:OT}
Let $\Phi$ and $\Psi$ be of the form \eqref{eq.def.Phi} and \eqref{eq.def.Psi}, respectively.\ Then, for all $f\in \Cb$,
\[
\big(If\big)(x)=\inf_{\mu\in \cP}\sup_{\nu\in \cP} \Big(\big(\nu f\big)(x)-\OT_c(\mu,\nu)+\gamma(\mu)\Big).
\]
\end{lemma}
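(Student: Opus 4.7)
The plan is to reduce the statement to an ``inner'' duality for the transport step and then absorb the $\gamma(\mu)$ term. By definition $(If)(x) = \inf_{\mu \in \cP}\bigl((\mu \Phi f)(x) + \gamma(\mu)\bigr)$, so it suffices to establish the pointwise identity
\[
(\mu \Phi f)(x) \;=\; \sup_{\nu \in \cP} \bigl( (\nu f)(x) - \OT_c(\mu, \nu) \bigr) \qquad \text{for every } \mu \in \cP;
\]
once this is in hand, the claimed formula follows by substituting and pulling $\gamma(\mu)$, which does not depend on $\nu$, inside the supremum.

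For the ``$\ge$'' direction of this inner identity, I would expand $\OT_c$ and observe that pairs $(\nu, \pi)$ with $\pi \in \cpl(\mu, \nu)$ are exactly the Borel probability measures $\pi$ on $\X \times \X$ with first marginal $\mu$, so
\[
\sup_{\nu \in \cP}\bigl( (\nu f)(x) - \OT_c(\mu, \nu) \bigr) \;=\; \sup_{\pi \colon \pi_1 = \mu} \int_{\X \times \X} \bigl( f(x + z) - c(z - y) \bigr)\,\pi(\d y, \d z).
\]
Writing $a = z - y$ inside the integrand and bounding pointwise by $\sup_{a \in \X}(f(x + y + a) - c(a)) = (\Phi f)(x + y)$, the right-hand side is at most $\int (\Phi f)(x + y)\,\mu(\d y) = (\mu \Phi f)(x)$, because $\pi$ has first marginal $\mu$.

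The reverse inequality is the main obstacle and requires a measurable $\eps$-selection. For every $\eps > 0$ I would construct a (universally) measurable map $a_\eps \colon \X \to \X$ with
\[
f(x + y + a_\eps(y)) - c(a_\eps(y)) \;\ge\; (\Phi f)(x + y) - \eps \qquad \text{for all } y \in \X,
\]
for instance by fixing a countable dense sequence $(b_k)_{k \in \N} \subset \X$ and setting $a_\eps(y) := b_{k_\eps(y)}$, where $k_\eps(y)$ is the smallest index $k$ for which the inequality above holds with $b_k$ in place of $a_\eps(y)$; the level sets of $k_\eps$ are Borel because $\Phi f \in \Cb$ and the map $(y,a) \mapsto f(x+y+a) - c(a)$ is jointly measurable. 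Denoting by $\nu_\eps$ the push-forward of $\mu$ under $y \mapsto y + a_\eps(y)$ and using the coupling $\pi_\eps(\d y, \d z) = \mu(\d y)\,\delta_{y + a_\eps(y)}(\d z) \in \cpl(\mu, \nu_\eps)$ then yields
\[
(\nu_\eps f)(x) - \OT_c(\mu, \nu_\eps) \;\ge\; \int_\X \bigl(f(x + y + a_\eps(y)) - c(a_\eps(y))\bigr)\,\mu(\d y) \;\ge\; (\mu \Phi f)(x) - \eps,
\]
and sending $\eps \downarrow 0$ closes the gap. The main delicacy in this last step is ensuring that the countable dense sequence $(b_k)$ actually $\eps$-approximates $(\Phi f)(x+y)$ pointwise in $y$, which is where the continuity of $c$ on the interior of its effective domain (inherited from convexity) enters together with the uniform continuity of $f$.
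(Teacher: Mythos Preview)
Your overall strategy coincides with the paper's: reduce to the inner identity $(\mu\Phi f)(x)=\sup_{\nu}\bigl((\nu f)(x)-\OT_c(\mu,\nu)\bigr)$, obtain the upper bound via the pointwise estimate $f(x+z)-c(z-y)\le(\Phi f)(x+y)$ integrated against an arbitrary coupling, and obtain the lower bound by producing a measurable near-optimal map $y\mapsto a(y)$ together with the deterministic coupling $\pi=\mu\circ\bigl[y\mapsto\bigl(y,y+a(y)\bigr)\bigr]^{-1}$. The paper proceeds in exactly this way; the only difference is that, instead of your explicit first-index construction over a countable dense set, it invokes the lattice property of the family $\{y\mapsto f(x+y+a(y))-c(a(y)):a\in\Bm(\X)\}$ and interchanges $\esssup$ with the integral via F\"ollmer--Schied, Theorem~A.37, together with monotone convergence.

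There is, however, a gap in your justification of the selection step. You argue that the fixed dense sequence $(b_k)$ suffices because of ``continuity of $c$ on the interior of its effective domain (inherited from convexity)''. That implication holds in finite dimensions, but on an infinite-dimensional Banach space a convex function need not be continuous on the interior of its domain unless it is locally bounded above there; for instance $c(a)=|a|^2+\ell(a)^2$ with $\ell$ an unbounded linear functional is convex, nonnegative, vanishes at $0$, has superlinear growth, and has full domain, yet is everywhere discontinuous. In such a situation the map $a\mapsto f(x+y+a)-c(a)$ is not upper semicontinuous, so there is no reason the supremum over $\X$ should be approached along your prescribed sequence $(b_k)$, and the index $k_\eps(y)$ may be undefined for some $y$. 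The paper's $\esssup$ route sidesteps this by working with arbitrary Borel selections rather than a fixed countable set; if you want to retain an explicit construction, you would need either an additional hypothesis (e.g.\ lower semicontinuity of $c$, which via Baire category forces local boundedness and hence continuity on $\mathrm{int}\,\dom c$) or a genuine measurable selection theorem in place of the dense-sequence trick.
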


\begin{proof}
Let $\Bm(\X)$ denote the space of all Borel measurable functions $\X\to \X$. Then, using
\cite[Theorem A.37]{foellmer2016finance} together with monotone convergence, we have that
\begin{align*}
\big(If\big)(x)&\leq \inf_{\mu\in \cP}\bigg(\int_{\X} \esssup_{a\in \Bm(\X)}\Big( f\big(x+y+a(y)\big)-c\big(a(y)\big)\Big)\,\mu(\d y)+\gamma(\mu)\bigg)\\
&=  \inf_{\mu\in \cP}\sup_{a\in \Bm(\X)}\bigg(\int_{\X}f\big(x+y+a(y)\big)-c\big(a(y)\big)\,\mu(\d y)+\gamma(\mu)\bigg)\\
&\leq \inf_{\nu\in \cP}\sup_{\nu\in \cP} \Big(\big(\nu f\big)(x)-\OT_c(\mu,\nu)+\gamma(\mu)\Big),
\end{align*}
where we have used that $\OT_c\big(\mu, \mu \circ [y \mapsto y + a(y)]^{-1} \big) \le \int_\X c\big(a(y)\big)\,\mu(\d y)$, choosing the comonotonic coupling $\mu\circ \big[y \mapsto \big(y,y + a(y)\big)\big]^{-1}$,   for all $\mu\in \cP$ and $a\in \Bm(\X)$.

Now, let $\mu,\nu\in \cP$ with $\inf_{\pi\in \cpl(\mu,\nu)} \int_{\X\times \X} c(z-y)\,\pi(\d y,\d z)<\infty$. Then,
\begin{align*}
\big(\nu f\big)(x)-\OT_c(\mu,\nu)+\gamma(\mu)&= \sup_{\pi\in \cpl(\mu,\nu)}\bigg( \int_{\X\times \X} f(x+z)-c(z-y)\,\pi(\d y,\d z)+\gamma(\mu)\bigg)\\
&\leq \int_\X \big(\Phi f)(x+y)\,\mu(\d y)+\gamma(\mu)=\big(\mu \Phi f\big)(x)+\gamma(\mu)
\end{align*}
for all $x\in \X$.\ Taking first the supremum over all $\nu\in \cP$ and then the infimum over all $\mu\in \cP$, the claim follows.
\end{proof}
We point out that a similar representation as in Lemma \ref{lem:OT} was also obtained in \cite{bartl2020computational},  for $\Psi=\mu$ with $\mu\in \cP$, using duality arguments and a version of Choquet's capacitability theorem. Moreover, in \cite{kupper2023risk}, the operator $I$, for $\Psi=\mu$ with $\mu\in \cP$, is defined as a risk measure with weak optimal transport penalty, and a partially more general version of the representation given in Lemma \ref{lem:OT}, is proved under additional constraints on the set of measures over which the supremum in $I$ is taken.\ This result is based on analytic selection arguments and an explicit representation of the extreme points of sets of measures defined in terms of generalized moment constraints, see \cite{weizsaecker1979irp,winkler1988extreme}.

\section{Main result}

As before, let $c \colon \X \to [0, \infty]$ be a convex function with $c(0)=0$ and superlinear growth, i.e., $\lim_{|a| \to \infty} c(a)/|a|= \infty$.\ For $t>0$ and $a\in \X$, we define $$c_t(a) := t c\big(a/t\big).$$ Then,
\begin{equation}\label{eq.overlinect}
\overline{c_t}(b)=\sup_{a\in \X} \big(b|a|-tc(a/t)\big)=t\sup_{a\in \X} \big(b|a/t|-c(a/t)\big)=t\overline c(b)\quad\text{for all }b\in[0,\infty).
\end{equation}
Throughout, let $(\mu_t)_{t>0}\subset \cP$ be an infinitely divisible family of probability measures and $(Y_t)_{t \ge 0}$ be an $\X$-valued L\'evy process on a filtered probability space $(\Omega, \cF, (\cF_t)_{t \ge 0}, \P)$ with $Y_t\sim \mu_t$ for all $t>0$.\ We point out that we do not impose any restrictions on the L\'evy process or its associated infinitely divisible distribution, not even completeness of the underlying probability space.\ In particular, {every} L\'evy process is covered by our setup. Prominent examples for L\'evy processes are Brownian Motions with drift and compound Poisson processes. We refer to \cite{applebaum2009levy,linde1986probability,sato2013levy} for a detailed discussion and examples of L\'evy processes.

For $t>0$ and $f\in \Cb$, let
\[
(\Phi_t f)(x) := \sup_{a \in \X} \big(f(x + a) - t c(a/t)\big)\quad\text{for all } x\in \X
\]
and $\mu_t f$ be given by \eqref{eq:def.mu} with $\mu = \mu_t$.\ We then consider
\[
I_tf:= \mu_t \Phi_tf\quad\text{and}\quad J_tf:=\Phi_t\mu_t f \quad \text{for all } t \ge 0 \text{ and } f \in \Cb.
\]
For notational convenience, we set $I_0f:=J_0f:=\Phi_0f:=f$ for all $f\in \Cb$.

Moreover, we define the set of admissible controls $\cA$ as the set of all adapted stochastic processes $\alpha\colon [0,\infty)\times \Omega \to \X$ with
\[
 \int_0^t |\alpha_u|\,\d u < \infty \quad\P\text{-a.s.\ for all }t\geq 0.
\]
For $t\geq 0$ and $x\in \X$, we set
\[
Y_t^x:=x+Y_t,
\]
and, for $\alpha\in \cA$, we consider the controlled dynamics
\[
Y_t^{x,\alpha} := x + Y_t+\int_0^t \alpha_u\,\d u, \quad \text{for } t \ge 0\text{ and } x \in \X.
\]
Then, the value function of the associated optimal control problem with running cost $c$ and terminal cost $f\in \Cb$ is given by
\begin{equation} \label{eq:def.Vt}
(V_t f)(x) := \sup_{\alpha \in \cA} \E \bigg[ f\big(Y_t^{x,\alpha}\big) - \int_0^t c(\alpha_u)\,\d u \bigg] \quad \text{for } t\geq0\text{ and }x\in \X.
\end{equation}
Observe that $V_t f\in \Cb$ for all $t\geq 0$ and $f\in \Cb$. Moreover, $V_0f=f$ for all $f\in \Cb$ and $$\| V_t f_1-V_t f_2\|_\infty\leq \|f_1-f_2\|_\infty \quad\text{for all }t\geq 0\text{ and }f_1,f_2\in \Cb.$$ 
The following two lemmata provide fundamental estimates for the interplay between the families $(I_t)_{t\geq 0}$, $(J_t)_{t\geq 0}$, and $(V_t)_{t\geq 0}$.

\begin{lemma}\label{lem.VI}
 For all $t\geq 0$ and $f\in \Cb$,
 \[
 V_tf\leq I_tf.
 \]
\end{lemma}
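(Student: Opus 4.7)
The plan is to prove the inequality pointwise by exploiting the defining supremum in $\Phi_t$ together with Jensen's inequality applied to the time-averaged control. Fix $t \geq 0$, $x \in \X$, $f \in \Cb$, and an admissible control $\alpha \in \cA$. Set $A_t(\omega) := \int_0^t \alpha_u(\omega)\,\d u$, which is $\P$-a.s.\ a well-defined element of $\X$ by the integrability condition in the definition of $\cA$, so that $Y_t^{x,\alpha} = x + Y_t + A_t$.

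The key step is the pointwise (in $\omega$) inequality
\[
f\big(x + Y_t + A_t\big) \leq \big(\Phi_t f\big)\big(x + Y_t\big) + t\, c\big(A_t/t\big),
\]
which follows directly by taking $a = A_t$ in the supremum defining $\Phi_t f(x + Y_t)$. Next, I would apply Jensen's inequality for the convex function $c$ with respect to the uniform probability measure $\tfrac{1}{t}\d u$ on $[0,t]$ to obtain
\[
c\Big(\tfrac{1}{t}\textstyle\int_0^t \alpha_u\,\d u\Big) \leq \tfrac{1}{t}\int_0^t c(\alpha_u)\,\d u,
\]
so that $t\,c(A_t/t) \leq \int_0^t c(\alpha_u)\,\d u$. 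Combining the two displays yields
\[
f\big(Y_t^{x,\alpha}\big) - \int_0^t c(\alpha_u)\,\d u \leq \big(\Phi_t f\big)\big(x + Y_t\big) \quad \P\text{-a.s.}
\]

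Taking expectations on both sides and using that $Y_t \sim \mu_t$, the right-hand side becomes $\int_\X (\Phi_t f)(x+y)\,\mu_t(\d y) = (\mu_t \Phi_t f)(x) = (I_t f)(x)$, which is independent of $\alpha$. Taking the supremum over $\alpha \in \cA$ on the left-hand side then gives $(V_t f)(x) \leq (I_t f)(x)$, as desired, and the case $t = 0$ is trivial since $V_0 f = I_0 f = f$.

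The only subtle point — and the place where some care is warranted — is the application of Jensen's inequality, which requires the integrand $c(\alpha_u(\omega))$ to be appropriately measurable and the inequality to hold with the convention $c \geq 0$ so that both sides are well-defined in $[0,\infty]$. Measurability of $u \mapsto c(\alpha_u)$ follows from the convexity and lower semicontinuity inherited by $c$ on the effective domain (one may approximate $c$ by its lower semicontinuous envelope, which does not decrease the right-hand cost term, or alternatively reduce to the case of finite $c$ by restricting to controls with $\int_0^t c(\alpha_u)\,\d u < \infty$, the complementary case being trivial since the right-hand side in the supremum defining $V_t f$ equals $-\infty$). Once this measurability is in place, the Bochner-valued Jensen inequality is standard, and the rest of the argument is purely algebraic.
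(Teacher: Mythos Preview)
Your proof is correct and in fact more direct than the paper's. Both arguments hinge on the same Jensen step $t\,c\bigl(\tfrac{1}{t}\int_0^t\alpha_u\,\d u\bigr)\le \int_0^t c(\alpha_u)\,\d u$, but the paper then routes the conclusion through the optimal-transport representation of $I_t$ (Lemma~\ref{lem:OT}): it introduces $\mu_t^\alpha=\P\circ\bigl(Y_t+\int_0^t\alpha_u\,\d u\bigr)^{-1}$, bounds $\E\bigl[\int_0^t c(\alpha_u)\,\d u\bigr]$ from below by $\OT_{c_t}(\mu_t,\mu_t^\alpha)$ via the coupling $\P\circ\bigl(Y_t,Y_t+\int_0^t\alpha_u\,\d u\bigr)^{-1}$, and then invokes Lemma~\ref{lem:OT} to pass to $I_t f$. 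You instead exploit the pointwise inequality $f(x+Y_t+A_t)-t\,c(A_t/t)\le (\Phi_t f)(x+Y_t)$ coming straight from the supremum defining $\Phi_t$, which bypasses optimal transport entirely and yields the bound after a single expectation. Your route is shorter and self-contained; the paper's route has the advantage of illustrating the link to the $\OT$-representation, which is of independent interest in their framework but not logically needed here. The measurability caveat you raise about $u\mapsto c(\alpha_u)$ is real but applies equally to the paper's use of Jensen; both proofs implicitly rely on $c$ being Borel measurable so that the running cost $\int_0^t c(\alpha_u)\,\d u$ is well defined in the first place.
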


\begin{proof}
 Let $t\geq 0$ and $f\in \Cb$.\ For $t=0$, the statement is trivial.\ We therefore consider the case $t>0$, and define $\mu_t^\alpha:=\P\circ \big(Y_t+\int_0^t\alpha_u\,\d u\big)^{-1}$ for all $\alpha\in \cA$. Then, choosing the particular coupling $\P\circ \big(Y_t,Y_t+\int_0^t\alpha_u\,\d u\big)^{-1}\in \cpl(\mu_t,\mu_t^\alpha)$ and using Jensen's inequality, 
\[
\inf_{\pi \in \cpl(\mu_t,\mu_t^\alpha)} \int_{\X \times \X} c_t(z-y)\pi(\d y, \d z)\le \E\bigg[t c\bigg(\frac1t \int_0^t \alpha_u\,\d u \bigg)\bigg]\le \E\bigg[\int_0^t c(\alpha_u)\,\d u\bigg]
\]
for all $\alpha\in \cA$.\ Hence, by Lemma \ref{lem:OT},
\[
V_tf= \sup_{\alpha\in \cA}\bigg(\mu_t^\alpha f-\E\bigg[\int_0^t c(\alpha_u)\,\d u\bigg]\bigg)\leq \sup_{\alpha\in \cA}\Big(\mu_t^\alpha f-\OT_{c_t}(\mu_t,\mu_t^\alpha)\Big)\leq I_t f.
\]
The proof is complete.
\end{proof}

\begin{lemma}\label{lem.apriori}
 Let $s,t\geq 0$ and $f\in \Cb$. Then,
 \begin{equation}\label{eq.apriori}
  V_{s+t}f\leq V_sV_tf\quad\text{and}\quad V_sJ_t f\leq V_{s+t}f.
 \end{equation}
\end{lemma}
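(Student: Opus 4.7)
The plan is to prove both bounds in \eqref{eq.apriori} by splitting an admissible control at time $s$ and exploiting the L\'evy/Markov property: if $\alpha\in \cA$ and we write $\tilde Y_r:= Y_{s+r}-Y_s$ and $\tilde\alpha_r:=\alpha_{s+r}$, then $\tilde Y$ is a L\'evy process with the same law as $Y$, independent of $\cF_s$, adapted to the shifted filtration $\bar\cF_r:=\cF_{s+r}$, and
\[
x+Y_{s+t}+\int_0^{s+t}\alpha_u\,\d u=\xi+\tilde Y_t+\int_0^t\tilde\alpha_r\,\d r,\quad\xi:=x+Y_s+\int_0^s\alpha_u\,\d u,
\]
with $\xi$ being $\cF_s$-measurable. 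The running cost splits analogously.

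For the first inequality, I fix $\alpha\in \cA$, split both the terminal state and the running cost as above, and apply the tower property in the expectation defining $V_{s+t}f(x)$. Conditioning on $\cF_s$ and using that $\tilde \alpha$ is admissible for the shifted problem driven by the independent L\'evy process $\tilde Y$, the inner conditional expectation is bounded pointwise by $(V_tf)(\xi)$. Taking the outer expectation and then the supremum over $\alpha\in\cA$ yields $V_{s+t}f(x)\leq V_s(V_tf)(x)$. The only subtle point here is a regular-conditional-probability argument identifying the conditional expectation with $V_tf$ evaluated at the $\cF_s$-measurable state $\xi$; this is standard given separability of $\X$.

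For the second inequality, I argue by $\eps$-approximation. Fix $\eps>0$ and $\beta\in \cA$, and set $\xi:=x+Y_s+\int_0^s\beta_u\,\d u$. Since $\mu_tf\in \Cb$ is uniformly continuous and $c$ grows superlinearly, the supremum defining $J_tf=\Phi_t\mu_tf$ is effectively attained on a bounded subset of $\X$, so that a first-index construction along a countable dense subset of $\X$ (or a Kuratowski--Ryll-Nardzewski selection) produces a Borel measurable map $a^*\colon \X\to \X$ with
\[
(\mu_tf)(\eta+a^*(\eta))-tc\big(a^*(\eta)/t\big)\geq (J_tf)(\eta)-\eps\quad\text{for all }\eta\in \X.
\]
Define $\alpha\in \cA$ by $\alpha_u:=\beta_u$ on $[0,s]$ and $\alpha_u:=a^*(\xi)/t$ on $(s,s+t]$; this is adapted since $a^*(\xi)$ is $\cF_s$-measurable. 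Plugging $\alpha$ into the definition of $V_{s+t}f(x)$, conditioning on $\cF_s$, and using that $\tilde Y_t\sim \mu_t$ is independent of $\cF_s$, the conditional expectation of $f(\xi+\tilde Y_t+a^*(\xi))$ given $\cF_s$ equals $(\mu_tf)(\xi+a^*(\xi))$. Therefore,
\[
V_{s+t}f(x)\geq \E\bigg[(\mu_tf)\big(\xi+a^*(\xi)\big)-tc\big(a^*(\xi)/t\big)-\int_0^s c(\beta_u)\,\d u\bigg]\geq \E\bigg[(J_tf)(\xi)-\int_0^s c(\beta_u)\,\d u\bigg]-\eps.
\]
Taking the supremum over $\beta\in \cA$ and sending $\eps\to 0$ gives $V_s(J_tf)(x)\leq V_{s+t}f(x)$.

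The main obstacle is the measurable $\eps$-selection $a^*$ in the second bound: the state space $\X$ is only assumed to be a separable Banach space, so finite-dimensional arguments are unavailable. Superlinearity of $c$ together with the boundedness of $\mu_tf$ localizes the relevant maximizing set, and then a standard selection theorem applies; this step must nevertheless be handled explicitly in order to guarantee that the concatenated control $\alpha$ is adapted.
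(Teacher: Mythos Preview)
Your argument for the first inequality $V_{s+t}f\leq V_sV_tf$ is essentially the paper's proof: split the control at time $s$, condition on $\cF_s$, use that $Y_{s+t}-Y_s$ is independent of $\cF_s$ with law $\mu_t$, and bound the inner conditional expectation by $(V_tf)$ evaluated at the $\cF_s$-measurable state.

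For the second inequality $V_sJ_tf\leq V_{s+t}f$, your route differs from the paper's. The paper avoids measurable selection entirely: it replaces the pointwise supremum $\sup_{a\in\X}$ inside the expectation by an essential supremum over $\beta\in\cA$ (through $a\leadsto\beta_s$), and then interchanges $\esssup$ and $\E$ using upward-directedness of the resulting family (this is \cite[Theorem~A.37]{foellmer2016finance}); after the interchange, independence of $Y_{s+t}-Y_s$ from $\cF_s$ turns $\mu_tf$ back into an expectation, and one recognizes an admissible control for $V_{s+t}$. Your approach instead constructs an explicit Borel $\eps$-selector $a^*$ and concatenates it with $\beta$ to obtain an admissible control. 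This is more hands-on and perhaps more transparent, but requires the selection step you flag. Both work in the stated generality: your selector can be produced without any continuity assumption on $c$ by observing that the sets $U_a:=\{\eta\in\X:(\mu_tf)(\eta+a)-c_t(a)>(J_tf)(\eta)-\eps\}$ are open (since $\mu_tf,J_tf\in\Cb$) and cover $\X$, extracting a countable subcover via the Lindel\"of property of the separable space $\X$, and taking the first index; boundedness of $a^*$ (hence integrability of the concatenated control) follows from superlinear growth of $c$ exactly as you indicate. The paper's $\esssup$ trick buys a shorter proof with no selection theorem, while your construction yields a concrete near-optimal control.
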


\begin{proof}
For $t=0$, both statements in \eqref{eq.apriori} are trivial.\ Therefore, assume that $t>0$.\ Let $x\in \X$ and $\alpha \in \cA$. Since $Y_{s+t}-Y_s$ is independent of $\cF_s$ with $Y_{s+t}-Y_s\sim \mu_t$, it follows that
\begin{align*}
     \E \bigg[ f\big(Y_{s+t}^{x,\alpha}\big) - \int_0^{s+t} c(\alpha_u)\,\d u \bigg]
    & = \E \bigg[ \E \bigg[ f\bigg(Y_s^{x,\alpha}+(Y_{s+t}-Y_s)+\int_{0}^{t}c(\alpha_{s+u})\, \d u\bigg) \\
    &\qquad \qquad\qquad\qquad -\int_0^{t} c(\alpha_{s+u})\,\d u \, \bigg|\, \cF_s \bigg] -\int_0^s c(\alpha_u)\,\d u\bigg] \\
    & \leq \E \bigg[(V_tf)\big(Y_s^{x,\alpha}\big) - \int_0^s c(\alpha_{u})\,\d u \bigg]\\
    &\leq (V_sV_tf)(x).
\end{align*}
Taking the supremum over all $\alpha\in \cA$, it follows that $\big(V_{s+t} f\big)(x)\leq \big(V_sV_tf\big)(x)$.

 Using again \cite[Theorem A.37]{foellmer2016finance} together with monotone convergence and the fact that $Y_{s+t}-Y_s$ is independent of $\cF_s$ with $Y_{s+t}-Y_s\sim \mu_t$, we find that
\begin{align*}
     \E \bigg[ \big(J_tf\big)&\big( Y_s^{x,\alpha}\big) - \int_0^{s} c(\alpha_u)\,\d u \bigg]\\
    & = \E \bigg[ \sup_{a\in \X}\bigg(\big(\mu_t f\big)\big(Y_s^{x,\alpha}+ta\big) -\int_{0}^{s}c(\alpha_{u})\, \d u-tc(a)\bigg)\bigg] \\
    &\leq \E \bigg[ \esssup_{\beta\in \cA}\bigg(\big(\mu_t f\big)\big(Y_s^{x,\alpha}+t\beta_s\big) -\int_{0}^{s}c(\alpha_{u})\, \d u-tc(\beta_s)\bigg)\bigg] \\
    &= \sup_{\beta\in \cA}\E \bigg[ \big(\mu_t f\big)\big(Y_s^{x,\alpha}+t\beta_s\big) -\int_{0}^{s}c(\alpha_{u})\, \d u-tc(\beta_s)\bigg] \\
     &= \sup_{\beta\in \cA} \E \bigg[ f\bigg(Y_{s+t}^x+\int_0^s\alpha_u\,\d u+t\beta_s\bigg) -\int_{0}^{s}c(\alpha_{u})\, \d u-tc(\beta_s)\bigg)\bigg]\\
    &\leq \sup_{\beta\in \cA} \E \bigg[ f\bigg(Y_{s+t}^x+\int_0^s\alpha_u\,\d u+\int_s^{s+t}\beta_u\,\d u\bigg) -\int_{0}^{s}c(\alpha_{u})\, \d u-\int_s^{s+t}c(\beta_u)\,\d u\bigg)\bigg]\\
    & \leq \big(V_{s+t}f\big)(x).
\end{align*}
Taking the supremum over all $\alpha\in \cA$, we may conclude that $\big(V_sJ_tf\big)(x)\leq \big(V_{s+t} f\big)(x)$.
\end{proof}

We are now ready to prove the main result on the convergence of the stochastic Hopf-Lax approximation.

\begin{theorem}\label{thm.main}
Let $t\geq 0$ and $f\in \Cb$.
\begin{enumerate}
\item[a)] For all $n\in \N$,
    \begin{equation}\label{eq.main1}
     J_{t/n}^nf\leq V_tf\leq I_{t/n}^n f
    \end{equation}
    and, for all $\eps >0$,
    \begin{equation}\label{eq.main1.1}
    \big\|I_{t/n}^nf-J_{t/n}^nf\big\|_\infty\leq \eps \quad\text{if}\quad n\geq \frac{t}\eps \overline c\Bigg(\frac{\var f}{\omega_f^{-1}(\eps)}\Bigg).
    \end{equation}
    Moreover,
    \begin{equation}\label{eq.main1.2}
     \sup_{n\in \N}J_{t/n}^nf=V_tf=\inf_{n\in \N}I_{t/n}^nf.
     \end{equation}
\item[b)] If $f\in \Lipb$, then
\begin{equation}\label{eq.main2}
    \big\|I_{t/n}^nf-J_{t/n}^nf\big\|_\infty\leq \tfrac{t}{n}\overline c\big(\|f\|_\Lip\big)\quad\text{for all }n\in \N.
    \end{equation}
\end{enumerate}
\end{theorem}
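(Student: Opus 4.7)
The plan is to assemble the theorem from the three building blocks already established: Lemma \ref{lem.key-estimate}/Proposition \ref{prop.guarantee} for the $I^n$--$J^n$ gap, Lemma \ref{lem.VI} for the one-step inequality $V_s \le I_s$, and Lemma \ref{lem.apriori} for the semigroup-type comparison of $V$ with itself on both sides. The scaling identity \eqref{eq.overlinect}, $\overline{c_{t/n}} = \frac{t}{n}\overline{c}$, is what will convert a fixed-step estimate into a convergence rate once we work with the step size $t/n$.

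First I would prove the sandwich $J_{t/n}^n f \le V_t f \le I_{t/n}^n f$ of \eqref{eq.main1} by two short inductions. For the upper bound, iterating the first inequality of Lemma \ref{lem.apriori} gives $V_t f \le V_{t/n}^n f$; then, since $V_{t/n}$ and $I_{t/n}$ are both monotone (they are built from suprema and from integration against $\mu_{t/n}$), applying Lemma \ref{lem.VI} factor by factor, $V_{t/n}^n f \le V_{t/n}^{n-1} I_{t/n} f \le \dots \le I_{t/n}^n f$. For the lower bound I would telescope with the second inequality of Lemma \ref{lem.apriori}: writing $J_{t/n}^n f = V_0 J_{t/n}^n f$ and successively moving one $J_{t/n}$ across into $V$, each step gives $V_{kt/n} J_{t/n}^{n-k} f \le V_{(k+1)t/n} J_{t/n}^{n-k-1} f$, ending in $V_t f$ at $k=n$.

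Next, for the quantitative estimates \eqref{eq.main1.1} and \eqref{eq.main2}, I would apply Proposition \ref{prop.guarantee} with $\Phi = \Phi_{t/n}$ and $\Psi f = \mu_{t/n} f$ (these fit exactly the templates \eqref{eq.def.Phi}--\eqref{eq.def.Psi}, the latter with $\gamma \equiv 0$ on $\{\mu_{t/n}\}$ and $+\infty$ otherwise). The only substitution needed is $\overline{c} \mapsto \overline{c_{t/n}} = \frac{t}{n}\overline{c}$, which transforms the Lipschitz bound into $\tfrac{t}{n}\overline{c}(\|f\|_\Lip)$ and turns the guarantee condition $\eps^{-1}\overline{c_{t/n}}\bigl(\var f / \omega_f^{-1}(\eps)\bigr) \le 1$ into exactly $n \ge \frac{t}{\eps}\overline{c}\bigl(\var f / \omega_f^{-1}(\eps)\bigr)$. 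Combined with the already established inequality $J_{t/n}^n f \le V_t f \le I_{t/n}^n f$, this simultaneously controls $\|I_{t/n}^n f - V_t f\|_\infty$ and $\|V_t f - J_{t/n}^n f\|_\infty$, not merely their difference.

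Finally, for \eqref{eq.main1.2}, the sandwich already yields $\sup_n J_{t/n}^n f \le V_t f \le \inf_n I_{t/n}^n f$. For the converse, uniform continuity of $f$ guarantees $\omega_f^{-1}(\eps) > 0$ for every $\eps > 0$, so the condition in \eqref{eq.main1.1} can be met for $n$ large enough, forcing the gap between $\inf_n I_{t/n}^n f$ and $\sup_n J_{t/n}^n f$ below any $\eps > 0$. I expect no serious obstacle here; the only delicate point to state cleanly is the chain of monotonicity arguments in the two inductions of step one, since they mix the operators $V_{t/n}$, $I_{t/n}$, $J_{t/n}$ acting on different intermediate functions, and one must be explicit that monotonicity is applied to the appropriate operator at each step.
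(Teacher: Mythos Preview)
Your proposal is correct and follows essentially the same route as the paper: the sandwich \eqref{eq.main1} is obtained from Lemma \ref{lem.VI} and the two inequalities of Lemma \ref{lem.apriori} by iteration, the quantitative bounds \eqref{eq.main1.1}--\eqref{eq.main2} come from Proposition \ref{prop.guarantee} applied with step size $t/n$ via the scaling \eqref{eq.overlinect}, and \eqref{eq.main1.2} follows from the sandwich plus the vanishing gap. The only cosmetic difference is that the paper phrases the lower bound as an induction on $n$ rather than your telescoping chain $V_{kt/n} J_{t/n}^{n-k} f \le V_{(k+1)t/n} J_{t/n}^{n-k-1} f$, but the content is identical.
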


\begin{proof}
Choosing $s=0$ in Lemma \ref{lem.apriori}, it holds $J_tf\leq V_tf$ for all $t\geq 0$ and $f\in \Cb$. Now, assume that $J_{t/n}^nf\leq V_tf$ for all $t\geq 0$, $f\in \Cb$, and some $n\in \N$. Then, by Lemma \ref{lem.apriori}, for all $t\geq 0$ and $f\in \Cb$,
\[
 J^{n+1}_{t/(n+1)}f\leq V_{nt/(n+1)}J_{t/(n+1)}f\leq V_tf.
\]
We have therefore shown, by induction, that $J_{t/n}^nf\leq V_f$ for all $t\geq 0$, $f\in \Cb$, and $n\in \N$. Moreover, by Lemma \ref{lem.VI} and Lemma \ref{lem.apriori},
\[
V_tf\leq V_{t/n}^nf\leq I_{t/n}^nf
\]
for all $t\geq 0$, $f\in \Cb$, and $n\in \N$.

Using \eqref{eq.overlinect}, the statement in \eqref{eq.main1.1} and claim b) follow from \eqref{eq.guarantee} and \eqref{eq.lip.estimate}, respectively, and it remains to prove \eqref{eq.main1.2}. To that end, let $t\geq0$ and $f\in \Cb$.\ Then, by \eqref{eq.main1},
\[
\sup_{n\in \N} J_{t/n}^nf\leq V_t f\leq \inf_{n\in \N} I_{t/n}^nf,
\]
which, together with \eqref{eq.main1.1}, implies that
\[
\sup_{n\in \N}J_{t/n}^nf=\inf_{n\in \N} I_{t/n}^nf.
\]
The proof is complete.
\end{proof}

As a consequence of Lemma \ref{lem.apriori} and Theorem \ref{thm.main}, we obtain the classical Hopf-Lax formula for the family $(\Phi_t)_{t\geq 0}$, see, e.g., \cite[\textsection3.3 Lemma 1]{evans2010pde} for the $\R^d$-case with $d\in \N$ and different sign convention, and the dynamic programming principle for the family $(V_t)_{t\geq 0}$. 

\begin{corollary} \label{cor:dpp}
 For all $s,t\geq0$ and $f\in \Cb$,
 \[
 \Phi_{s+t}f=\Phi_s\Phi_t f, \quad J_{s+t}f\leq J_sJ_t f,\quad I_{s+t}f\geq I_sI_t f,\quad \text{and}\quad V_{s+t}f=V_sV_tf.
 \]
\end{corollary}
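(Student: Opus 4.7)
The plan is to establish the four claims in the order $V$, $\Phi$, $J$, $I$, since the dynamic programming principle for $V$ specializes immediately to the Hopf-Lax semigroup property and the latter then feeds into the arguments for $J$ and $I$. For $V_{s+t}f = V_s V_t f$, one direction, $V_{s+t} f \leq V_s V_t f$, is given directly by Lemma \ref{lem.apriori}. For the reverse, I would iterate the second estimate of the same lemma: a short induction on $k$, combined with monotonicity of $V$ in its terminal cost, yields
\[
V_s J_{t/n}^k f \leq V_{s + kt/n} f \quad\text{for all } 0 \leq k \leq n,
\]
and in particular $V_s J_{t/n}^n f \leq V_{s+t} f$. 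The sandwich $J_{t/n}^n f \leq V_t f \leq I_{t/n}^n f$ from Theorem \ref{thm.main} together with the guarantee \eqref{eq.main1.1} forces $\|V_t f - J_{t/n}^n f\|_\infty \to 0$; since $V_s$ is a contraction on $\Cb$, passing $n \to \infty$ gives $V_s V_t f \leq V_{s+t} f$, hence equality.

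The Hopf-Lax identity $\Phi_{s+t} = \Phi_s \Phi_t$ is then obtained by specializing the just-proved equality for $V$ to the trivial L\'evy process $Y_t \equiv 0$, in which case $\mu_t = \delta_0$, $\mu_t f = f$, and $I_t = J_t = \Phi_t$. Constant controls $\alpha_u \equiv a \in \X$ show $V_t f \geq \Phi_t f$, while Jensen's inequality applied to the convex function $c$ gives $\int_0^t c(\alpha_u)\,\d u \geq t c\big(\tfrac{1}{t}\int_0^t \alpha_u\,\d u\big)$, which combined with the change of variable $a = \int_0^t \alpha_u\,\d u$ yields $V_t f \leq \Phi_t f$. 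Thus $V_t = \Phi_t$ in this setting, and the equality for $V$ specializes to the Hopf-Lax formula.

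With $\Phi_{s+t} = \Phi_s \Phi_t$ available, the remaining two claims reduce to two simple ingredients: infinite divisibility, which gives $\mu_{s+t} = \mu_s * \mu_t$ and hence $\mu_{s+t} = \mu_s \mu_t$ as operators on $\Cb$, and the one-sided commutation
\[
(\Phi_r \mu g)(x) = \sup_a \int_\X \big(g(x + a + z) - rc(a/r)\big)\,\mu(\d z) \leq \int_\X (\Phi_r g)(x + z)\,\mu(\d z) = (\mu \Phi_r g)(x),
\]
valid for every probability measure $\mu$ on $\X$, which follows by pushing the supremum inside the integral. Chaining these yields
\[
J_{s+t} f = \Phi_s \Phi_t \mu_s \mu_t f \leq \Phi_s \mu_s \Phi_t \mu_t f = J_s J_t f, \qquad I_{s+t} f = \mu_s \mu_t \Phi_s \Phi_t f \geq \mu_s \Phi_s \mu_t \Phi_t f = I_s I_t f,
\]
where the inequality for $I$ uses the same commutation applied to $g = \Phi_t f$ followed by monotonicity of $\mu_s$. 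The only genuine obstacle in the whole plan is the limit passage for the $V$ identity, since a merely pointwise monotone convergence of $J_{t/n}^n f$ to $V_t f$ would not survive application of the nonlinear contraction $V_s$; the uniform convergence that makes the argument work is precisely what the explicit bound \eqref{eq.main1.1} from Theorem \ref{thm.main} supplies.
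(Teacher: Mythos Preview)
Your proposal is correct and follows essentially the same route as the paper: both derive $V_{s+t}=V_sV_t$ from Lemma~\ref{lem.apriori} and the sandwich/guarantee of Theorem~\ref{thm.main}, specialize to the trivial L\'evy process to obtain $\Phi_{s+t}=\Phi_s\Phi_t$, and then combine the Hopf--Lax semigroup property, infinite divisibility $\mu_{s+t}=\mu_s\mu_t$, and the commutation $\Phi_r\mu\leq \mu\Phi_r$ (which is exactly \eqref{eq.JleqI} with $\Psi=\mu$) to get the $J$ and $I$ inequalities. The only cosmetic differences are that the paper packages the reverse inequality for $V$ as a single $\eps$-chain $V_sV_tf\leq I_{s/n}^nV_tf\leq J_{s/n}^nV_tf+\tfrac\eps2\leq J_{s/n}^nJ_{t/n}^nf+\eps\leq V_{s+t}f+\eps$ (the last step being the same iteration of Lemma~\ref{lem.apriori} that you spell out), and that the paper additionally takes $\cF_t=\{\emptyset,\Omega\}$ so that controls are deterministic and $V_t=\Phi_t$ is immediate, whereas your Jensen argument achieves the same identification without restricting the filtration.
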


\begin{proof}
We start by proving the dynamic programming principle $V_{s+t}f=V_sV_t f$ for all $s,t\geq0$ and $f\in \Cb$.\ To that end, let $s,t\geq 0$, $f\in \Cb$, and $\eps>0$. Then, by Theorem \ref{thm.main}, there exists some $n\in \N$ such that
\[
I_{s/n}^nV_t\leq J_{s/n}^nV_t f +\tfrac\eps2\quad\text{and}\quad I_{s/n}^n\leq J_{s/n}^n f +\tfrac\eps2.
\]
Then, using Lemma \ref{lem.apriori} together with the fact that $J_{s/n}$ is a contraction, we find that
\begin{align*}
V_{s+t}f&\leq V_sV_tf\leq I_{s/n}^nV_t f\leq J_{s/n}^nV_t f+\tfrac\eps2\leq J_{s/n}^nI_{t/n}^n f+\tfrac\eps2\leq J_{s/n}^nJ_{t/n}^n f+\eps\leq V_{s+t}f+\eps.
\end{align*}
Since $\eps>0$ was arbitrary, it follows that $V_sV_tf=V_{t+s}f$ for all $s,t\geq0$ and $f \in \Cb$.

Choosing $Y_t=0$ or, equivalently, $\mu_t=\delta_0$, and $\cF_t=\{\emptyset,\Omega\}$ for all $t\geq 0$, it follows that $V_t=\Phi_t$ for all $t\geq 0$, so that $\Phi_{s+t}f=\Phi_s\Phi_t f$ for all $s,t\geq0$ and $f\in \Cb$.\ Choosing, respectively, $\gamma(\mu)=\infty$ for all $\mu\in \cP\setminus\{\mu_t\}$ and $\mu\in \cP\setminus\{\mu_s\}$, which leads to $\Psi=\mu_t$ and $\Psi=\mu_s$ in \eqref{eq.def.Psi}, we obtain from \eqref{eq.JleqI} that
\[
 J_tJ_s f=\Phi_t \mu_t\Phi_s\mu_sf\geq \Phi_t\Phi_s \mu_t\mu_s f=\Phi_{t+s}\mu_{t+s}f=J_{t+s}f
\]
and
\[
 I_tI_s f=\mu_t\Phi_t \mu_s\Phi_sf\leq \mu_t\mu_s\Phi_t\Phi_sf=\mu_{t+s}\Phi_{t+s}f=I_{t+s}f
\]
for all $s,t\geq0$ and $f\in \Cb$.
\end{proof}

\begin{remark}
 While all results of this section are concerned with suprema appearing in the operators $I_t$, $J_t$, and $V_t$ for $t\geq0$, they easily transfer to infima instead of suprema with $c$ being replaced by $-c$ in the definition of these operators.\ Considering $\overline I_t f:= -I_t(-f)$, $\overline J_t f:= -J_t(-f)$ and $\overline V_t f:=-V_t(-f)$ for $t\geq 0$ and $f\in \Cb$, one obtains the analogous results for the infimal operators with reversed inequality symbols, except in norm estimates, infima being replaced by suprema, and vice versa.
\end{remark}

\begin{remark}
    Theorem \ref{thm.main}, in particular, the explicit rate in \eqref{eq.main2}, suggests the possibility to develop numerical schemes to approximate the value function of the stochastic control problem \eqref{eq:def.Vt} based on an iteration of the operators $I$ and $J$. In \cite{kupper2023risk}, numerical methods for generalized versions of the operator $I$ have been developed. For example, in the case $c(a) = |a|^p$ for all $a\in \X$, the optimization inside the integral can be moved outside, becoming an optimization over sets of $p$-integrable functions, which leads to
    \[
    (If)(x) = \sup_{a \in L^p(\mu;\X)} \int_\X f(x + y + a(y)) - |a(y)|^p \,\mu(\d y),
    \]
    where $L^p(\mu;\X)$ is the space of $\mu$-equivalence classes of functions $a \in \Bm(\X)$ such that $\int_\X |a(y)|^p\,\mu(\d y) < \infty$.
    In a finite-dimensional setting, this optimization problem is then tackled using standard universal approximation results, see \cite{hornik1989universal,hornik1991approximation}.\ The idea is to perform the optimization over finite-dimensional sets of functions that are dense in $L^p$, e.g., neural networks.\ A similar approach could be employed for a functional approximation of the one-step operator $I_{t/n} f$.\ To include the dependence on the $x$ variable, one could discretize the space $\X$ and solve the optimization on a fixed grid, interpolating the resulting function in between.\ Another possibility that seems to be natural, given the continuity of the problem, is to interpolate the optimizing vector field rather than the value function.\
    One can also tackle the entire problem with an approximation over functions $a \colon \X \times \X \to \X$. If these functions are parametrized by neural networks, the optimization can be performed by sampling points $x$ from $\X$ during the training phase according to a chosen reference measure $\eta \in \cP$.\ A natural candidate could be, for example, a gaussian distribution concentrated on a region of the space where one is interested in computing the value function.

    With similar arguments, in the case $c(a)=|a|^p$ for $a\in \X$, one can choose a reference measure $\eta \in \cP$ with finite moment of order $p$ to sample the space variable $x$ and perform an optimization that returns an optimal $a \colon \X \to \X$ such that
    \[
    (Jf)(x) \approx \int_\X f(x + a(x) + y)\,\mu(\d y) - |a(x)|^p.
    \]
\end{remark}

\end{document}